\newtheorem{thm}{Theorem}
\title{On a planar random motion with asymptotically correlated components}
\author{ 
	\href{0000-0002-6421-533X}{Manfred Marvin Marchione}\\
	Department of Statistical Sciences\\
	Sapienza University of Rome\\
	\texttt{manfredmarvin.marchione@uniroma1.it} \\
	\And
	\href{https://orcid.org/0000-0002-6163-044X}{Enzo Orsingher} \\
	Department of Statistical Sciences\\
	Sapienza University of Rome\\
	\texttt{enzo.orsingher@uniroma1.it}}
\date{\today}
\begin{document}
\maketitle

\begin{abstract}
We study a planar random motion $\big(X(t),\,Y(t)\big)$ with orthogonal directions, where the direction switches are governed by a homogeneous Poisson process. At each Poisson event, the moving particle turns clockwise or counterclockwise according to a rule which depends on the current direction. We prove that the components of the vector $\big(X(t),\,Y(t)\big)$ can be represented as linear combinations of two independent telegraph processes with different intensities. The exact distribution of $\big(X(t),\,Y(t)\big)$ is then obtained both in the interior of the support and on its boundary, where a singular component is present. We show that, in the hydrodynamic limit, the process behaves as a planar Brownian motion with correlated components. The distribution of the time spent by the process moving vertically is then studied. We obtain its exact distribution and discuss its hydrodynamic limit. In particular, in the limiting case, the process $\big(X(t),\,Y(t)\big)$ spends half of the time moving vertically.
\end{abstract}

\keywords{Telegraph process \and Bessel functions \and Correlation}

\section{Introduction}
\noindent Planar random motions with orthogonal directions have been studied in numerous papers starting from Orsingher and Kolesnik \cite{orsingherkolesnik1996} and Orsingher \cite{orsingher2000}. The authors investigated a random motion which changes direction at Poisson times by performing clockwise and counterclockwise turns with equal probabilities. Kolesnik and Orsingher \cite{kolesnikorsingher2001} investigated a variant of the process where reversals of direction, indicated as \textit{reflections}, are admitted. Planar motions with reflection were further investigated in recent papers by Cinque and Orsingher \cite{cinqueorsingher2023} and Orsingher and Marchione \cite{orsinghermarchione2024}, who studied the case in which reflection, clockwise and counterclockwise turns occur with different probabilities. Non-orthogonal random motions have also been studied extensively in the literature. Minimal planar random motions have been discussed by Di Crescenzo \cite{dicrescenzo}, while multivariate extensions have been proposed by Lachal et al. \cite{lachal2006minimal} and Iuliano and Verasani \cite{iulianoverasani}, who also assumed the changes of direction being governed by a geometric counting process in place of the classical Poisson process. A non-Euclidean extension was also discussed by Cammarota and Orsingher \cite{cammarotaorsingher}, who investigated an orthogonal random motion on the Poincaré half-plane. In the physical literature, finite-velocity random motions are referred to as \textit{run-and-tumble} and, in recent years, there has been a remarkable increase of interest in the study of such processes. While run-and-tumble processes have been dealt with mainly in the univariate case (see Dhar et al. \cite{dhar2019run}, Angelani \cite{angelani}, Evans and Majumdar \cite{evans2018run}), planar extensions have been studied, for example, by Santra et al. \cite{santra2020run} and Singh et al. \cite{singh2022mean}.\\
\noindent In this paper, we study a planar random motion $\big(X(t),\,Y(t)\big)$ with orthogonal directions $$d_j=\left(\cos\left(\frac{\pi j}{2}\right),\sin\left(\frac{\pi j}{2}\right)\right),\qquad j=0,1,2,3$$ where, of course, $d_j=d_{j+4n}$ for all integer values of $n$.
We assume that the vector process $\big(X(t),\,Y(t)\big)$ lies in the origin of the Cartesian plane at time $t=0$, and starts moving along one of the four possible directions taken with equal probability. The motion is assumed to take place at constant velocity $c>0$ and direction changes occur at Poisson times. We denote by $D(t)$ the direction of the process at time $t>0$ and by $N(t)$ the number of direction changes which occurred until time $t$. The intensity of the Poisson process $N(t)$ is constant and is denoted by $\lambda$. The key feature of the process $\big(X(t),\,Y(t)\big)$ under study is the rule that determines the directional changes. We assume that the rule for the direction switches depends on whether the process is moving horizontally or vertically. If the process is moving horizontally, that is if $D(t)\in\{d_0,d_2\}$, the process turns counterclockwise, therefore passing from $d_j$ to $d_{j+1}$, with probability $p$, while it turns clockwise, passing from $d_{j+1}$ to $d_j$, with probability $1-p$, $0< p<1$. Conversely, if the process is moving vertically, that is if $D(t)\in\{d_1,d_3\}$, the process turns counterclockwise with probability $1-p$ and clockwise with probability $p$. Using the notation by Orsingher and Bassan \cite{orsingherbassan}, the direction $D(t)$ can be regarded as a continuous-time Markov process with state space $\{d_0,d_1,d_2,d_3\}$ and symmetric generator matrix $$G=\begin{pmatrix}-\lambda  & \lambda p  & 0 & \lambda (1-p)  \\ \lambda p  & - \lambda  &  \lambda (1-p)  & 0 \\ 0 & \lambda (1-p) & -\lambda & \lambda p \\ \lambda(1-p) &0 &\lambda p & -\lambda\end{pmatrix}.$$ The rules which govern the direction changes are summarized in figure \ref{fig:directionchanges}.\\
\begin{figure}[h]
\centering
\includegraphics[scale=0.75]{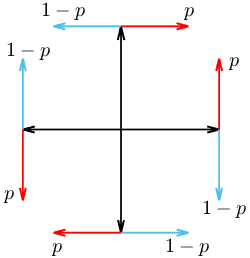}
\caption{the black arrows represent the four directions along which the particle with position $\big(X(t),\,Y(t)\big)$ can move. For each of these directions, we indicated with coloured arrows the directions along which the particle can move when a switch occurs. Red arrows represent the switches which can occur with probability $p$, while the directions highlighted in blue can be taken with probability $1-p$.}\label{fig:directionchanges}
\end{figure}
Figure \ref{fig:directionchanges} illustrates that, if $p$ is close to 1, then the particle tends to move either up to the right or down to the left. In other words, the particle is likely to move in parallel to the bisector of the first and third quadrants of the Cartesian plane. Similarly, if $p$ is close to 0, the particle tends to move in parallel to the bisector of the second and fourth quadrants.\\
\noindent It can be verified that the support of the process $\big(X(t),\,Y(t)\big)$ is given by the time-dependent square $$S_{ct}=\{(x,y)\in\mathbb{R}^2:\lvert x+y \lvert \le ct,\,\lvert x-y\lvert\le ct\}.$$ Moreover, the particle lies on the boundary $\partial S_{ct}$ if and only if its path alternates only two contiguous directions. Some sample paths are illustrated in figure \ref{fig:samplepaths}.\\
\begin{figure}[h]
\centering
\includegraphics[scale=0.70]{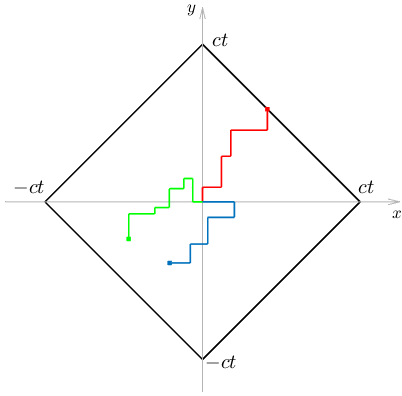}
\caption{some sample paths of the vector process $\big(X(t),\,Y(t)\big)$. The simulations were performed with $p=0.9$. Thus, the particles tend to move in parallel to the bisector of the first and third quadrant of the Cartesian plane. In the case of the red path, the particle lies on the boundary of the square $S_{ct}$ because it moved by alternating only two contiguous directions.}\label{fig:samplepaths}
\end{figure}

We are interested in studying both the distributions of $\big(X(t),\,Y(t)\big)$ in the interior of $S_{ct}$ and on the boundary $\partial S_{ct}$. The main result of this paper establishes that the following decomposition holds:
\begin{equation}\begin{dcases}X(t)=U(t)+V(t)\\Y(t)=U(t)-V(t).\end{dcases}\label{introdecomp}\end{equation} where $U(t)$ and $V(t)$ are two independent telegraph processes with intensities $\lambda (1-p)$ and $\lambda p$ respectively and both having constant velocity $\frac{c}{2}$. In the special case $p=\frac{1}{2}$, the representation (\ref{introdecomp}) was obtained by Orsingher \cite{orsingher2000}. Cinque and Orsingher \cite{cinqueorsingher2023} also discussed the case in which a non-homogeneous Poisson process governs the direction changes. By using the decomposition (\ref{introdecomp}), we are able to obtain the exact distribution of $\big(X(t),\,Y(t)\big)$. In particular, we prove that, for $\lvert x\lvert+\lvert y\lvert<ct$, it holds that 
\begin{align}\mathbb{P}\Big(X(t)\in &dx,\,Y(t)\in dy\Big)\nonumber\\=&\frac{e^{-\lambda t}}{2c^2}\left[\lambda(1-p) \, I_0\left(\frac{\lambda(1-p)}{c}\sqrt{c^2t^2-(x+y)^2}\right)+\frac{\partial}{\partial t}I_0\left(\frac{\lambda(1-p)}{c}\sqrt{c^2t^2-(x+y)^2}\right)\right]\nonumber\\
&\;\cdot\left[\lambda p\, I_0\left(\frac{\lambda p}{c}\sqrt{c^2t^2-(x-y)^2}\right)+\frac{\partial}{\partial t}I_0\left(\frac{\lambda p}{c}\sqrt{c^2t^2-(x-y)^2}\right)\right]\,dx\,dy.\label{fexact}\end{align} We also examine the distribution of $\big(X(t),\,Y(t)\big)$ on the boundary of the support. We first calculate the probability of the particle lying on $\partial S_{ct}$, which reads
\begin{align}\mathbb{P}\Big(\big(X(t),\,Y(t)\big)\in\partial S_{ct}\Big)=e^{-\lambda t (1-p)}+e^{-\lambda t p}-e^{-\lambda t}.\label{intro:boundarymass}\end{align} For $p=\frac{1}{2}$, formula (\ref{intro:boundarymass}) coincides with that obtained by Orsingher \cite{orsingher2000}. We then obtain the exact distribution of the process on $\partial S_{ct}$. Without loss of generality, we consider the top-right side of the square $S_{ct}$ and we prove that, for $\lvert\eta\lvert<ct$,
$$\mathbb{P}\Big(X(t)+Y(t)=ct,\;X(t)-Y(t)\in d\eta\Big)/d\eta=\frac{e^{-\lambda t}}{4c}\left[\lambda p\;I_0\left(\frac{\lambda p}{c}\sqrt{c^2t^2-\eta^2}\right)+\;\frac{\partial}{\partial t}I_0\left(\frac{\lambda p}{c}\sqrt{c^2t^2-\eta^2}\right)\right].$$

One interesting feature of the here studied process $\big(X(t),\,Y(t)\big)$ is given by its interesting behaviour in the hydrodynamic limit. It is well known that, for $\lambda,c\to+\infty$, with $\frac{\lambda}{c^2}\to1$, finite-velocity random motions converge to Brownian motions (see, for example, Orsingher \cite{orsingher1990}). Differently from previous works, the here studied process converges to a bivariate Brownian motion whose components are linearly correlated. Indeed, the probability density function of $\big(X(t),\,Y(t)\big)$ satisfies, in the limiting case, the diffusion equation $$\frac{\partial f}{\partial t}=\frac{1}{8p(1-p)}\;\Delta f+\frac{2p-1}{4p(1-p)}\,\frac{\partial^2 f}{\partial x\,\partial y}.$$ Moreover, we are able to prove that $$\lim_{\lambda,c\to+\infty}\mathbb{P}\Big(X(t)\in dx,\,Y(t)\in dy\Big)=\frac{\sqrt{p(1-p)}}{\pi t}\;e^{-\frac{x^2+y^2+2xy(1-2p)}{2t}}\,dx\,dy.$$
We conclude our work by studying the time spent by the process $\big(X(t),\,Y(t)\big)$ moving vertically. Thus, we define the stochastic process $$T(t)=\int_0^t\mathds{1}_{\{D(\tau)\in\{d_1,d_3\}\}}\mathop{d\tau},\qquad t>0.$$ We show that the probability density function of $T(t)$ satisfies the partial differential equation $$\left(\frac{\partial^2}{\partial t^2}+\frac{\partial^2}{\partial s\;\partial t}+2\lambda \frac{\partial}{\partial t}+\lambda\frac{\partial}{\partial s}\right)h=0.$$ This permits us to prove that
\begin{equation}\mathbb{P}\Big(T(t)\in\mathop{ds}\Big)/ds=e^{-\lambda t}\left[\lambda I_0\left(2\lambda\sqrt{s(t-s)}\right)+\frac{\partial}{\partial t}I_0\left(2\lambda\sqrt{s(t-s)}\right)\right],\qquad s\in(0,t).\label{introTdens}\end{equation} We then discuss the hydrodynamic limit of the distribution (\ref{introTdens}) and we show that, for $\lambda\to+\infty$, it holds that $$\lim_{\lambda+\infty}\mathbb{P}\Big(T(t)\in\mathop{ds}\Big)/ds=\delta\left(s-\frac{t}{2}\right)$$ where $\delta(\cdot)$ represents Dirac's delta distribution. In other words, in the limiting case the vector process $\big(X(t),\,Y(t)\big)$ spends half of the time moving vertically.\\
The paper is organized in the following manner. In section 2, we prove the main results of the paper. In particular, we prove the representation (\ref{introdecomp}) and we obtain the distribution of the process in the interior of the support. Section 3 contains the results concerning the distribution on the boundary $\partial S_{ct}$. Finally, in section 4 we study the distribution of the time $T(t)$ spent by $\big(X(t),\,Y(t)\big)$ moving vertically.

\section{Distribution in the interior of the support}
\noindent We start our work by examining the distribution of the process $\big(X(t),\,Y(t)\big)$ in the interior of the square $S_{ct}$. Thus, we consider the probability density function
\begin{equation}f(x,y,t)\mathop{dx}\mathop{dy}=\mathbb{P}\Big(X(t)\in dx,\,Y(t)\in dy\Big),\qquad\lvert x\lvert+\lvert y\lvert<ct.\label{f}\end{equation}
In order to study the distribution (\ref{f}), we first define, for $j=0,1,2,3,$ the auxiliary density functions
\begin{equation}\label{fj}f_j(x,y,t)\mathop{dx}\mathop{dy}=\mathbb{P}\Big(X(t)\in dx,\,Y(t)\in dy,\,D(t)=d_j\Big),\qquad\lvert x\lvert+\lvert y\lvert<ct.\end{equation}
Standard methods permit us to show that the functions (\ref{fj}) satisfy the linear system of differential equations
\begin{equation}\label{fsystem}\begin{dcases}
\frac{\partial f_0}{\partial t}=-c\frac{\partial f_0}{\partial x}+\lambda p\,f_1+\lambda (1-p)\,f_3-\lambda f_0\\
\frac{\partial f_1}{\partial t}=-c\frac{\partial f_1}{\partial y}+\lambda p\,f_0+\lambda (1-p)\,f_2-\lambda f_1\\
\frac{\partial f_2}{\partial t}=c\frac{\partial f_2}{\partial x}+\lambda (1-p)\,f_1+\lambda p\,f_3-\lambda f_2\\
\frac{\partial f_3}{\partial t}=c\frac{\partial f_3}{\partial y}+\lambda (1-p)\,f_0+\lambda p\,f_2-\lambda f_3
\end{dcases}
\end{equation}
with initial conditions $f_j(x,y,0)=\frac{1}{4}\,\delta(x)\,\delta(y),\,j=0,1,2,3.$ Moreover, since $$f(x,y,t)=\sum_{j=0}^3f_j(x,y,t)$$ it can be proved that $f$ satisfies the fourth-order partial differential equation
\begin{align}\Bigg\{\left(\frac{\partial}{\partial t}+\lambda\right)^4-\bigg(c^2&\Delta+2\lambda^2\Big(p^2+(1-p)^2\Big)\bigg)\left(\frac{\partial}{\partial t}+\lambda\right)^2+c^4\frac{\partial^4}{\partial x^2\partial y^2}\nonumber\\
&+2\lambda^2c^2(1-2p)\frac{\partial^2}{\partial x\,\partial y}+\lambda^4(1-2p)^2\Bigg\}f=0\label{fourthpde}\end{align}
\noindent where $\Delta=\dfrac{\partial^2}{\partial x^2}+\dfrac{\partial^2}{\partial y^2}$ represents the bivariate Laplacian. Several authors have shown that partial differential equations governing finite-velocity random motions converge, in the hydrodynamic limit, to heat-type diffusion equations. It is interesting to observe that, in contrast to previous works, equation (\ref{fourthpde}) converges to a heat equation governing a bivariate Brownian motion with correlated components. Indeed, dividing equation (\ref{fourthpde}) by $\lambda^3$, simplifying the expression and taking the limit for $\lambda,c\to+\infty$, with $\frac{\lambda}{c^2}\to1$, yields the limiting equation
\begin{equation}\frac{\partial f}{\partial t}=\frac{1}{8p(1-p)}\;\Delta f+\frac{2p-1}{4p(1-p)}\,\frac{\partial^2 f}{\partial x\,\partial y}.\label{hydropde}\end{equation}
The presence of linear correlation in the hydrodynamic limit is consistent with the fact that the process $\big(X(t),\,Y(t)\big)$ tends to move in parallel to one of the quadrant bisectors, as discussed in the introduction.\\

In the following theorem, we obtain the explicit form of the characteristic function of $\big(X(t),\,Y(t)\big)$.

\begin{thm}\label{thm:interiorchf}The characteristic function of $\big(X(t),\,Y(t)\big)$ is 
\begin{align}\mathbb{E}\left[e^{i\alpha X(t)+i\beta Y(t)}\right]=\frac{e^{-\lambda t}}{4}\Bigg\{&\left(1+\frac{\lambda p}{A(\alpha,\beta)}\right)e^{A(\alpha,\beta)t}+\left(1-\frac{\lambda p}{A(\alpha,\beta)}\right)e^{-A(\alpha,\beta)t}\Bigg\}\nonumber\\
&\cdot\Bigg\{\left(1+\frac{\lambda (1-p)}{B(\alpha,\beta)}\right)e^{B(\alpha,\beta)t}+\left(1-\frac{\lambda (1-p)}{B(\alpha,\beta)}\right)e^{-B(\alpha,\beta)t}\Bigg\}\label{interiorchf}\end{align} where  \begin{equation}\label{Adef}A(\alpha,\beta)=\frac{1}{2}\sqrt{4\lambda^2p^2-c^2(\alpha-\beta)^2}\end{equation} and \begin{equation}\label{Bdef}B(\alpha,\beta)=\frac{1}{2}\sqrt{4\lambda^2(1-p)^2-c^2(\alpha+\beta)^2}.\end{equation}\end{thm}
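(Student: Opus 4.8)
The plan is to exploit the decomposition (\ref{introdecomp}). Since
\[
\alpha X(t)+\beta Y(t)=(\alpha+\beta)\,U(t)+(\alpha-\beta)\,V(t)
\]
and $U(t),V(t)$ are \emph{independent}, the joint characteristic function factorizes,
\[
\mathbb{E}\!\left[e^{i\alpha X(t)+i\beta Y(t)}\right]
=\mathbb{E}\!\left[e^{i(\alpha+\beta)U(t)}\right]\cdot\mathbb{E}\!\left[e^{i(\alpha-\beta)V(t)}\right],
\]
so the problem reduces to computing the characteristic function of a single symmetric telegraph process and then specializing the parameters. This is the cleanest route because independence is part of the content of (\ref{introdecomp}), which I may assume.

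Next I would record the characteristic function of a telegraph process $g(t)$ with velocity $v$ and switching rate $\mu$, starting from the origin with a symmetric initial direction. Its Fourier transform $\phi(\gamma,t)=\mathbb{E}[e^{i\gamma g(t)}]$ solves the telegraph equation in transformed form, $\phi_{tt}+2\mu\,\phi_t+v^2\gamma^2\phi=0$, subject to $\phi(\gamma,0)=1$ and $\phi_t(\gamma,0)=0$, the latter because $\phi_t(\gamma,0)=i\gamma\,\mathbb{E}[\dot g(0)]=0$ by the symmetric start. The characteristic roots are $-\mu\pm\Gamma$ with $\Gamma=\sqrt{\mu^2-v^2\gamma^2}$, and imposing the two initial conditions yields
\[
\phi(\gamma,t)=\frac{e^{-\mu t}}{2}\left[\left(1+\frac{\mu}{\Gamma}\right)e^{\Gamma t}+\left(1-\frac{\mu}{\Gamma}\right)e^{-\Gamma t}\right].
\]

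Finally I would specialize. For $V(t)$ the intensity is $\mu=\lambda p$, the velocity is $v=c/2$ and the frequency is $\gamma=\alpha-\beta$, so $\Gamma=\tfrac12\sqrt{4\lambda^2p^2-c^2(\alpha-\beta)^2}=A(\alpha,\beta)$; for $U(t)$ the intensity is $\mu=\lambda(1-p)$, again $v=c/2$, and $\gamma=\alpha+\beta$, so $\Gamma=\tfrac12\sqrt{4\lambda^2(1-p)^2-c^2(\alpha+\beta)^2}=B(\alpha,\beta)$, matching (\ref{Adef})--(\ref{Bdef}). Multiplying the two factors, the exponential prefactors combine as $\tfrac12 e^{-\lambda p t}\cdot\tfrac12 e^{-\lambda(1-p)t}=\tfrac14 e^{-\lambda t}$, and the two bracketed factors reproduce exactly (\ref{interiorchf}). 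The only care needed here is deriving the telegraph characteristic function with the correct symmetric initial conditions and matching $\Gamma$ to $A$ and $B$; both are routine.

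The genuine obstacle would arise only if one insisted on \emph{not} invoking (\ref{introdecomp}). In that case the alternative is to transform the fourth-order equation (\ref{fourthpde}) into a constant-coefficient ODE in $t$ for $\hat f(\alpha,\beta,t)=\mathbb{E}[e^{i\alpha X(t)+i\beta Y(t)}]$, using $\partial_x\mapsto -i\alpha$, $\partial_y\mapsto -i\beta$. Seeking solutions $\hat f=e^{rt}$ and substituting $\nu=r+\lambda$ leads to the biquadratic $\nu^4-2(A^2+B^2)\nu^2+(A^2-B^2)^2=0$, whose roots are $\nu=\pm A\pm B$, hence the four exponents $-\lambda\pm A\pm B$. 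The laborious step is then pinning down the four integration constants: one needs $\hat f(0),\dots,\partial_t^3\hat f(0)$, which are computed from the first-order system (\ref{fsystem}) together with the initial data $\hat f_j(0)=\tfrac14$ for the transforms $\hat f_j$ of the $f_j$. This delivers the same expression (\ref{interiorchf}) but is considerably heavier than the factorization argument, which is why I would follow the decomposition route.
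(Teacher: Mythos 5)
Your main route is circular within the logic of the paper. The decomposition (\ref{introdecomp}) is not an available hypothesis: it is the paper's main result (theorem \ref{thm:decomp}), and the paper proves it \emph{from} theorem \ref{thm:interiorchf}, precisely by showing that the characteristic function (\ref{interiorchf}) factorizes as $\mathbb{E}\left[e^{i(\alpha+\beta)U(t)}\right]\mathbb{E}\left[e^{i(\alpha-\beta)V(t)}\right]$. So when you write that independence ``is part of the content of (\ref{introdecomp}), which I may assume,'' you are assuming exactly what this theorem is needed to establish. Your computations themselves are correct --- the telegraph ODE $\phi_{tt}+2\mu\phi_t+v^2\gamma^2\phi=0$ with $\phi(\gamma,0)=1$, $\phi_t(\gamma,0)=0$, the roots $-\mu\pm\Gamma$, and the matching $\Gamma=A$ for $(\mu,v,\gamma)=(\lambda p,\,c/2,\,\alpha-\beta)$ and $\Gamma=B$ for $(\lambda(1-p),\,c/2,\,\alpha+\beta)$ --- but they only prove the implication ``decomposition $\Rightarrow$ (\ref{interiorchf})'', which is the converse of the direction the paper's architecture requires.

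The gap is repairable in two ways, one of which you name yourself. Your fallback sketch --- Fourier-transforming (\ref{fourthpde}) into a constant-coefficient ODE in $t$, obtaining the biquadratic with roots $\pm A\pm B$ after removing the factor $e^{-\lambda t}$, and fixing the four constants from $\widehat{f}(0),\dots,\partial_t^3\widehat{f}(0)$ computed out of (\ref{fsystem}) with $\widehat{f}_j(\alpha,\beta,0)=\tfrac14$ --- is exactly the paper's proof; but in your write-up it remains a sketch, and the ``laborious step'' of pinning down the constants (the paper inverts a $4\times 4$ Vandermonde-type system after rewriting the initial data in terms of $A$ and $B$) is precisely where the specific weights $1\pm\lambda p/A$ and $1\pm\lambda(1-p)/B$ come from, so it cannot be waved away. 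Alternatively, you could de-circularize your preferred route by proving the decomposition pathwise, with no characteristic functions at all: setting $U=(X+Y)/2$ and $V=(X-Y)/2$, the switching rules of figure \ref{fig:directionchanges} imply that at each Poisson event exactly one of the velocities $\dot U,\dot V\in\{\pm c/2\}$ flips --- $\dot V$ with probability $p$ and $\dot U$ with probability $1-p$, independently of the current direction --- so by Poisson thinning $U$ and $V$ are independent telegraph processes with rates $\lambda(1-p)$ and $\lambda p$. With that lemma established first, your factorization argument is valid and arguably cleaner than the paper's; as written, however, the proposal has a genuine logical gap.
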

\begin{proof}With the notation $$\widehat{f}(\alpha,\beta,t)=\mathbb{E}\left[e^{i\alpha X(t)+i\beta Y(t)}\right]$$ we define the auxiliary function \begin{equation}\label{ftildedef}\widetilde{f}(\alpha,\beta,t)=e^{\lambda t}\widehat{f}(\alpha,\beta,t).\end{equation} Equation (\ref{fourthpde}) implies that $\widehat{f}$ satisfies the ordinary differential equation
\begin{equation}
\begin{dcases}
\dfrac{d^4\widetilde{f}}{dt^4}-\bigg(2\lambda^2\Big(p^2+(1-p)^2\Big)-c^2(\alpha^2+\beta^2)\bigg)\dfrac{d^2\widetilde{f}}{dt^2}+\bigg(c^2\alpha\beta-\lambda^2(1-2p)\bigg)^2\widetilde{f}=0\\
\widetilde{f}(\alpha,\beta,0)=1\\
\dfrac{d}{dt}\widetilde{f}(\alpha,\beta,t)\Big\rvert_{t=0}=\lambda\\\label{eulerode}
\dfrac{d^2}{dt^2}\widetilde{f}(\alpha,\beta,t)\Big\rvert_{t=0}=\lambda^2-\frac{c^2(\alpha^2+\beta^2)}{2}\\
\dfrac{d^3}{dt^3}\widetilde{f}(\alpha,\beta,t)\Big\rvert_{t=0}=\lambda^3-\lambda c^2(\alpha^2-(1-2p)\alpha\beta+\beta^2)
\end{dcases}
\end{equation}
where the initial conditions have been obtained with the method described by Orsingher and Marchione \cite{orsinghermarchione2024}. The biquadratic algebraic equation associated to the Euler-type differential equation (\ref{eulerode}) has roots
\begin{align}
r_0=&\frac{1}{2}\left(\sqrt{4\lambda^2p^2-c^2(\alpha-\beta)^2}+\sqrt{4\lambda^2(1-p)^2-c^2(\alpha+\beta)^2}\right)\nonumber\\
r_1=&\frac{1}{2}\left(\sqrt{4\lambda^2p^2-c^2(\alpha-\beta)^2}-\sqrt{4\lambda^2(1-p)^2-c^2(\alpha+\beta)^2}\right)\nonumber\\
r_2=&-\frac{1}{2}\left(\sqrt{4\lambda^2p^2-c^2(\alpha-\beta)^2}+\sqrt{4\lambda^2(1-p)^2-c^2(\alpha+\beta)^2}\right)\nonumber\\
r_3=&-\frac{1}{2}\left(\sqrt{4\lambda^2p^2-c^2(\alpha-\beta)^2}-\sqrt{4\lambda^2(1-p)^2-c^2(\alpha+\beta)^2}\right).\nonumber
\end{align}
Thus, with $A(\alpha,\beta)$ and $B(\alpha,\beta)$ defined by formulas (\ref{Adef}) and (\ref{Bdef}), the general solution to equation (\ref{eulerode}) can be expressed as
\begin{align}\widetilde{f}(\alpha,\beta,t)=&k_0(\alpha,\beta)\,e^{A(\alpha,\beta)t+B(\alpha,\beta)t}+k_1(\alpha,\beta)\,e^{A(\alpha,\beta)t-B(\alpha,\beta)t}\nonumber\\
&+k_2(\alpha,\beta)\,e^{-A(\alpha,\beta)t-B(\alpha,\beta)t}+k_3(\alpha,\beta)\,e^{-A(\alpha,\beta)t+B(\alpha,\beta)t}.\label{ABkgeneralsol}
\end{align}
By using the initial conditions, it can be shown that the coefficients $k_j(\alpha,\beta),\;j=0,1,2,3$, satisfy the linear system
\begin{equation}
\begin{pmatrix}
1&1&1&1\\
A+B&A-B& -A-B&-A+B\\
(A+B)^2&(A-B)^2& (A+B)^2&(A-B)^2\\
(A+B)^3&(A-B)^3& -(A+B)^3&-(A-B)^3\\
\end{pmatrix}
\begin{pmatrix}
k_0\\k_1\\k_2\\k_3
\end{pmatrix}=\begin{pmatrix}
1\\\lambda\\\lambda^2-\frac{c^2(\alpha^2+\beta^2)}{2}\\[0.1cm]\lambda^3-\lambda c^2\left(\alpha^2-\alpha\beta(1-2p)+\beta^2\right)
\end{pmatrix}
\label{ABtoinvert}\end{equation}
\noindent where we omitted the dependence of $A, B, k_0, k_1, k_2$ and $k_3$ on $\alpha$ and $\beta$ for simplicity. We now observe that the initial conditions of equation (\ref{eulerode}) can be reformulated by writing
$$\lambda^2-\frac{c^2(\alpha^2+\beta^2)}{2}=2\lambda^2p(1-p)+A(\alpha,\beta)^2+B(\alpha,\beta)^2$$
and
$$\lambda^3-\lambda c^2\left(\alpha^2-\alpha\beta(1-2p)+\beta^2\right)=2\lambda\Big(A(\alpha,\beta)^2+B(\alpha,\beta)^2\Big)+\lambda(1-2p)\Big(A(\alpha,\beta)^2-B(\alpha,\beta)^2\Big).$$ Thus, by inverting the coefficient matrix of the system (\ref{ABtoinvert}), we obtain
\begin{equation}
\begin{pmatrix}
k_0\\k_1\\k_2\\k_3
\end{pmatrix}=
\begin{pmatrix}
-\frac{(A-B)^2}{8AB}&-\frac{(A-B)^2}{8AB(A+B)}&\frac{1}{8AB}&\frac{1}{8AB(A+B)}\\[0.1cm]
\frac{(A+B)^2}{8AB}&\frac{(A+B)^2}{8AB(A-B)}&-\frac{1}{8AB}& -\frac{1}{8AB(A-B)}\\[0.1cm]
-\frac{(A-B)^2}{8AB}&\frac{(A-B)^2}{8AB(A+B)}&\frac{1}{8AB}&-\frac{1}{8AB(A+B)}\\[0.1cm]
\frac{(A+B)^2}{8AB}&-\frac{(A+B)^2}{8AB(A-B)}&-\frac{1}{8AB}& \frac{1}{8AB(A-B)}
\end{pmatrix}
\begin{pmatrix}
1\\\lambda\\2\lambda^2p(1-p)+A^2+B^2\\[0.1cm]2\lambda(A^2+B^2)+\lambda(1-2p)(A^2-B^2)
\end{pmatrix}
\end{equation}
which yields
\begin{align}
k_0(\alpha,\beta)=&\frac{1}{4}\left(1+\frac{\lambda p}{A(\alpha,\beta)}\right)\left(1+\frac{\lambda (1-p)}{B(\alpha,\beta)}\right)\nonumber\\
k_1(\alpha,\beta)=&\frac{1}{4}\left(1+\frac{\lambda p}{A(\alpha,\beta)}\right)\left(1-\frac{\lambda (1-p)}{B(\alpha,\beta)}\right)\nonumber\\
k_2(\alpha,\beta)=&\frac{1}{4}\left(1-\frac{\lambda p}{A(\alpha,\beta)}\right)\left(1-\frac{\lambda (1-p)}{B(\alpha,\beta)}\right)\nonumber\\
k_3(\alpha,\beta)=&\frac{1}{4}\left(1-\frac{\lambda p}{A(\alpha,\beta)}\right)\left(1+\frac{\lambda (1-p)}{B(\alpha,\beta)}\right).\nonumber
\end{align}
The proof is completed by substituting the coefficients $k_j(\alpha,\beta),\;j=0,1,2,3$ into the expression (\ref{ABkgeneralsol}), rearranging the terms and inverting the transformation (\ref{ftildedef}).
\end{proof}

The characteristic function we obtained in theorem \ref{thm:interiorchf} plays a key role in the proof of the main result of this paper, which is stated below.

\begin{thm}\label{thm:decomp}Let $U(t)$ and $V(t)$ be two independent telegraph processes with intensities $\lambda (1-p)$ and $\lambda p$ respectively and both having constant velocity $\frac{c}{2}$. The process $\big(X(t),\,Y(t)\big)$ admits the representation
\begin{equation}\begin{dcases}X(t)=U(t)+V(t)\\Y(t)=U(t)-V(t).\end{dcases}\label{UVdecomposition}\end{equation}\end{thm}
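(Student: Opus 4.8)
The plan is to establish the representation at the level of distributions by comparing characteristic functions, exploiting the explicit formula already obtained in Theorem \ref{thm:interiorchf}. Under the proposed identity (\ref{UVdecomposition}) one has $\alpha X(t)+\beta Y(t)=(\alpha+\beta)U(t)+(\alpha-\beta)V(t)$, so the independence of $U$ and $V$ factorizes the joint characteristic function as
\begin{equation*}
\mathbb{E}\left[e^{i\alpha(U+V)+i\beta(U-V)}\right]=\mathbb{E}\left[e^{i(\alpha+\beta)U}\right]\cdot\mathbb{E}\left[e^{i(\alpha-\beta)V}\right].
\end{equation*}
The first ingredient is the characteristic function of a single telegraph process: for $W(t)$ with intensity $\mu$, velocity $v$ and uniformly random initial direction,
\begin{equation*}
\mathbb{E}\left[e^{i\gamma W(t)}\right]=e^{-\mu t}\left[\cosh\!\left(t\sqrt{\mu^2-v^2\gamma^2}\right)+\frac{\mu}{\sqrt{\mu^2-v^2\gamma^2}}\,\sinh\!\left(t\sqrt{\mu^2-v^2\gamma^2}\right)\right].
\end{equation*}

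The decisive observation is that, upon inserting the correct parameters, the two radicals collapse exactly onto the quantities of (\ref{Adef})--(\ref{Bdef}). For $V$, with $\mu=\lambda p$, $v=\tfrac{c}{2}$ and argument $\gamma=\alpha-\beta$, the radical equals $\sqrt{\lambda^2p^2-\tfrac{c^2}{4}(\alpha-\beta)^2}=A(\alpha,\beta)$; for $U$, with $\mu=\lambda(1-p)$, $v=\tfrac{c}{2}$ and argument $\gamma=\alpha+\beta$, it equals $B(\alpha,\beta)$. Writing each hyperbolic factor through $\cosh(\nu t)+\tfrac{\mu}{\nu}\sinh(\nu t)=\tfrac12\big(1+\tfrac{\mu}{\nu}\big)e^{\nu t}+\tfrac12\big(1-\tfrac{\mu}{\nu}\big)e^{-\nu t}$, with $(\mu,\nu)=(\lambda p,A)$ and $(\mu,\nu)=(\lambda(1-p),B)$ respectively, then multiplying the two factors and using $e^{-\lambda p t}e^{-\lambda(1-p)t}=e^{-\lambda t}$, the product reproduces (\ref{interiorchf}) term by term. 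Since a characteristic function determines the law uniquely, this yields $(X(t),Y(t))\stackrel{d}{=}(U(t)+V(t),U(t)-V(t))$ for each fixed $t$.

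I expect the only genuine obstacle to be bookkeeping: casting the telegraph characteristic function in the exponential form whose coefficients are precisely $1\pm\tfrac{\lambda p}{A}$ and $1\pm\tfrac{\lambda(1-p)}{B}$, and checking that the arguments $\alpha\pm\beta$ generate $A$ and $B$ separately rather than some mixture. One subtlety worth flagging is analyticity: for large $|\alpha|,|\beta|$ the radicals turn imaginary, so the identity should be read as an equality of entire functions of $(\alpha,\beta)$, the exponential form being insensitive to the branch of the square root; this removes any concern about the sign conventions in (\ref{Adef})--(\ref{Bdef}).

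A more probabilistic route, which I would mention because it delivers the stronger pathwise statement together with independence directly, is to analyze $U=(X+Y)/2$ and $V=(X-Y)/2$ along the trajectories. Each direction $d_j$ corresponds bijectively to a pair of signs recording whether $U$ and $V$ are currently increasing or decreasing (at common speed $\tfrac{c}{2}$), and a direct inspection of the generator $G$ shows that at every Poisson event exactly one of the two signs reverses: that of $V$ with probability $p$ and that of $U$ with probability $1-p$, irrespective of the present direction. By the thinning property of the Poisson process, the $V$-reversals and the $U$-reversals then form two \emph{independent} Poisson processes of rates $\lambda p$ and $\lambda(1-p)$, so that $U$ and $V$ are independent telegraph processes of the stated intensities; the uniform initial direction makes their initial velocities independent and symmetric, matching the telegraph initial condition.
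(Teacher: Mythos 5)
Your primary argument is essentially identical to the paper's proof: the authors also prove the theorem by writing the telegraph characteristic functions of $U$ and $V$ evaluated at $\alpha+\beta$ and $\alpha-\beta$, observing that the radicals are exactly $B(\alpha,\beta)$ and $A(\alpha,\beta)$, and multiplying by independence to recover (\ref{interiorchf}). That part is correct, including the parameter bookkeeping ($\mu=\lambda p$, $v=\tfrac c2$, $\gamma=\alpha-\beta$ giving $A$, and $\mu=\lambda(1-p)$, $\gamma=\alpha+\beta$ giving $B$), and your analyticity remark about the radicals turning imaginary is a legitimate point the paper leaves tacit.

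Your supplementary probabilistic route, however, is genuinely different from anything in the paper, and it is worth noting that it is \emph{stronger}. I verified the sign bookkeeping: with $\dot U=(\dot X+\dot Y)/2$ and $\dot V=(\dot X-\dot Y)/2$, the directions $d_0,d_1,d_2,d_3$ correspond bijectively to the sign pairs $(+,+),(+,-),(-,-),(-,+)$ for $(\sgn\dot U,\sgn\dot V)$, and inspection of the generator $G$ confirms that every admissible transition flips exactly one sign --- the $V$-sign with probability $p$ and the $U$-sign with probability $1-p$, in all four states. Since the flip-type probabilities do not depend on the current state, the marks are i.i.d.\ and independent of $N(t)$, so Poisson thinning yields two independent Poisson processes of rates $\lambda p$ and $\lambda(1-p)$ driving the reversals of $V$ and $U$ respectively; the uniform initial direction factorizes into independent symmetric initial velocities. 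This delivers the representation (\ref{UVdecomposition}) as an almost-sure equality of paths together with independence of $U$ and $V$ as processes, whereas the characteristic-function argument (yours and the paper's alike) only identifies the law of the vector $\big(X(t),\,Y(t)\big)$ at each fixed $t$, which is strictly weaker than what the statement ``admits the representation'' naturally suggests. If anything, your pathwise sketch would deserve to be the main proof rather than a side remark.
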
\begin{proof}We prove the result by showing that the characteristic functions of $\big(X(t),\,Y(t)\big)$ and $\Big(U(t)+V(t),\,U(t)-V(t)\Big)$ coincide. We first observe that, by using standard results on the telegraph process, we can write
$$\mathbb{E}\left[e^{i(\alpha+\beta)U(t)}\right]=\frac{e^{-\lambda (1-p) t}}{2}\Bigg\{\left(1+\frac{\lambda (1-p)}{B(\alpha,\beta)}\right)e^{B(\alpha,\beta)t}+\left(1-\frac{\lambda (1-p)}{B(\alpha,\beta)}\right)e^{-B(\alpha,\beta)t}\Bigg\}$$ and
$$\mathbb{E}\left[e^{i(\alpha-\beta)V(t)}\right]=\frac{e^{-\lambda p t}}{2}\Bigg\{\left(1+\frac{\lambda p}{A(\alpha,\beta)}\right)e^{A(\alpha,\beta)t}+\left(1-\frac{\lambda p}{A(\alpha,\beta)}\right)e^{-A(\alpha,\beta)t}\Bigg\}$$
where $A(\alpha,\beta)$ and $B(\alpha,\beta)$ are defined as in theorem \ref{thm:interiorchf}. Thus, in view of formula (\ref{interiorchf}), it can be immediately verified that
$$\mathbb{E}\left[e^{i\alpha X(t)+i\beta Y(t)}\right]=\mathbb{E}\left[e^{i(\alpha+\beta)U(t)}\right]\,\mathbb{E}\left[e^{i(\alpha-\beta)V(t)}\right]=\mathbb{E}\left[e^{i\alpha \left(U(t)+V(t)\right)+i\beta \left(U(t)-V(t)\right)}\right]$$ where we have used the independence between $U(t)$ and $V(t)$ in the last step.\end{proof}
Theorem \ref{thm:decomp} states that the components of the vector process $\big(X(t),\,Y(t)\big)$ can be represented as linear combinations of independent telegraph processes with different intensities. Our result generalizes the findings of Orsingher \cite{orsingher2000} in the special case $p=\frac{1}{2}$ which were later extended by Cinque and Orsingher \cite{cinqueorsingher2023} to the case in which a non-homogeneous Poisson process governs the direction changes. In view of theorem \ref{thm:decomp}, we are now able to obtain the exact distribution of $\big(X(t),\,Y(t)\big)$.

\begin{thm}The probability density function of $\big(X(t),\,Y(t)\big)$ in the interior of $S_{ct}$ is given, for $\qquad \lvert x\lvert+\lvert y\lvert<ct$, by
\begin{align}f(x,y,t)=\frac{e^{-\lambda t}}{2c^2}&\left[\lambda(1-p) \, I_0\left(\frac{\lambda(1-p)}{c}\sqrt{c^2t^2-(x+y)^2}\right)+\frac{\partial}{\partial t}I_0\left(\frac{\lambda(1-p)}{c}\sqrt{c^2t^2-(x+y)^2}\right)\right]\nonumber\\
&\;\cdot\left[\lambda p\, I_0\left(\frac{\lambda p}{c}\sqrt{c^2t^2-(x-y)^2}\right)+\frac{\partial}{\partial t}I_0\left(\frac{\lambda p}{c}\sqrt{c^2t^2-(x-y)^2}\right)\right].\label{fexact}\end{align}\end{thm}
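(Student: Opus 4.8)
The plan is to combine the decomposition of Theorem~\ref{thm:decomp} with the classical one-dimensional telegraph density and a single linear change of variables. Recall that a telegraph process with velocity $v$ and intensity $\mu$ started at the origin has absolutely continuous part
\[
\frac{e^{-\mu t}}{2v}\left[\mu\, I_0\!\left(\frac{\mu}{v}\sqrt{v^2t^2-\xi^2}\right)+\frac{\partial}{\partial t}I_0\!\left(\frac{\mu}{v}\sqrt{v^2t^2-\xi^2}\right)\right],\qquad |\xi|<vt,
\]
together with two atoms, each of mass $\tfrac12 e^{-\mu t}$, located at $\xi=\pm vt$ and accounting for the event that no reversal has occurred. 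First I would write this density for $U$, with $v=\tfrac{c}{2}$ and $\mu=\lambda(1-p)$, and for $V$, with $v=\tfrac{c}{2}$ and $\mu=\lambda p$, denoting the resulting one-dimensional densities by $p_U$ and $p_V$.

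Since Theorem~\ref{thm:decomp} presents $(X,Y)$ as the image of the independent pair $(U,V)$ under the invertible linear map $(u,v)\mapsto(u+v,u-v)$, the next step is to form the joint law of $(U,V)$ as the product $p_U\,p_V$ and transport it through this map. The inverse is $u=\tfrac{x+y}{2}$, $v=\tfrac{x-y}{2}$, with constant Jacobian factor $\tfrac12$, so that
\[
f(x,y,t)=\frac12\, p_U\!\left(\frac{x+y}{2},t\right)p_V\!\left(\frac{x-y}{2},t\right).
\]
Before multiplying the densities I would justify discarding the singular parts: the identity $\max(|x+y|,|x-y|)=|x|+|y|$ shows that the interior region $|x|+|y|<ct$ corresponds exactly to $|u|<\tfrac{c}{2}t$ and $|v|<\tfrac{c}{2}t$ holding simultaneously, so both $U$ and $V$ lie strictly inside their supports and the endpoint atoms contribute nothing in the interior (they instead produce the boundary mass analysed in Section~3).

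What remains is simplifying the Bessel arguments and tracking the constants. Substituting $u=\tfrac{x+y}{2}$ gives $\frac{\mu}{c/2}\sqrt{(c/2)^2t^2-u^2}=\frac{\lambda(1-p)}{c}\sqrt{c^2t^2-(x+y)^2}$, and similarly the argument for $V$ becomes $\frac{\lambda p}{c}\sqrt{c^2t^2-(x-y)^2}$; because the substitution touches only the spatial variables it commutes with $\frac{\partial}{\partial t}$, so the derivative terms transform in the same way. The two prefactors $\frac{1}{2v}=\frac1c$ and the Jacobian $\frac12$ multiply to $\frac{1}{2c^2}$, while $e^{-\lambda(1-p)t}e^{-\lambda p t}=e^{-\lambda t}$, and collecting the factors reproduces (\ref{fexact}).

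I do not anticipate a serious difficulty: once Theorem~\ref{thm:decomp} is available, the statement is essentially a change of variables applied to a product density. The one genuinely delicate point is the matching of supports — confirming through $\max(|x+y|,|x-y|)=|x|+|y|$ that the interior of $S_{ct}$ is precisely the region where both telegraph components are absolutely continuous — since this is what legitimises dropping the singular parts of the two telegraph laws and writing $f$ as the plain product of the two continuous densities.
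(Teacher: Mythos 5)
Your proposal is correct and takes essentially the same route as the paper: the paper's proof likewise invokes Theorem~\ref{thm:decomp} to write $f(x,y,t)=\tfrac{1}{2}\,f_U\big(\tfrac{x+y}{2},t\big)\,f_V\big(\tfrac{x-y}{2},t\big)$ and then substitutes the Bessel-function form of the telegraph density. The details you spell out --- the Jacobian factor, the identity $\max(\lvert x+y\lvert,\lvert x-y\lvert)=\lvert x\lvert+\lvert y\lvert$ matching the interior of $S_{ct}$ to the joint interior of the two telegraph supports, and the resulting legitimacy of dropping the endpoint atoms --- are exactly the steps the paper compresses into ``clearly implies'' and ``immediately follows,'' and your constants and Bessel arguments all check out.
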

\begin{proof}Denoting by $f_U$ and $f_V$ the density functions of $U$ and $V$ respectively, the representation (\ref{UVdecomposition}) clearly implies that $$f(x,y,t)=\frac{1}{2}\,f_U\left(\frac{x+y}{2},\,t\right)\,f_V\left(\frac{x-y}{2},\,t\right).$$ Thus, the result immediately follows by using the representation of the distribution of the telegraph process in terms of Bessel functions.\end{proof}
By using the asymptotic behaviour of the Bessel function $$I_0(x)\sim\frac{e^x}{\sqrt{2\pi x}}\qquad \text{as } x\to+\infty$$ Orsingher \cite{orsingher1990} was able to show that the probability density function of a telegraph process converges, in the hydrodynamic limit, to the transition function of a Brownian motion. By using his results, it can be easily proved that the hydrodynamic limit for $\lambda,c\to+\infty$, $\frac{\lambda}{c^2}\to1$, of the density function (\ref{fexact}) reads
\begin{equation}\label{hydrotransition}\lim_{\lambda,c\to+\infty}f(x,y,t)=\frac{\sqrt{p(1-p)}}{\pi t}\;e^{-\frac{x^2+y^2+2xy(1-2p)}{2t}}.\end{equation} We emphasize that the density function (\ref{hydrotransition}) is the transition function of a Brownian motion with correlated components and it represents the fundamental solution to equation (\ref{hydropde}). It is interesting to observe that the correlation coefficient vanishes if $p=\frac{1}{2}$.

\section{Distribution on the boundary}
In this section, we study the distribution of the process $\big(X(t),\,Y(t)\big)$ on the boundary of the square $S_{ct}$. For this purpose, we start by calculating the probability of the particle lying on $\partial S_{ct}$. We first consider the top-right side of the square, for which we give the following result.
\begin{thm}\label{thm:probboundary}It holds that \begin{equation}\label{boundaryprob}\mathbb{P}\Big(X(t)+Y(t)=ct\Big)=\frac{1}{2}\,e^{-\lambda t (1-p)}.\end{equation}\end{thm}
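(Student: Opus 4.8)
The plan is to reduce the statement to a well-understood property of the one-dimensional telegraph process by invoking the decomposition of Theorem~\ref{thm:decomp}. Since that theorem yields the equality in distribution of the vector $\big(X(t),Y(t)\big)$ and $\big(U(t)+V(t),\,U(t)-V(t)\big)$ at each fixed time $t$, we have $X(t)+Y(t)\stackrel{d}{=}2U(t)$, where $U$ is a telegraph process of velocity $\frac{c}{2}$ and intensity $\lambda(1-p)$. The top-right side $x+y=ct$ corresponds to the maximal value of $2U(t)$, so the probability to be computed is exactly the atom of the law of $U(t)$ at its extreme point $\frac{c}{2}t$:
\begin{equation*}
\mathbb{P}\big(X(t)+Y(t)=ct\big)=\mathbb{P}\Big(U(t)=\tfrac{c}{2}t\Big).
\end{equation*}

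Next I would identify this atom. A telegraph particle moving at speed $\frac{c}{2}$ attains the extremal displacement $\frac{c}{2}t$ if and only if it travels in the positive direction throughout $[0,t]$, that is, if and only if its initial velocity is $+\frac{c}{2}$ and it undergoes no reversals. Because the initial direction is positive with probability $\frac12$ and, independently, the probability of no Poisson switch of $U$ in $[0,t]$ is $e^{-\lambda(1-p)t}$, the atom has mass $\frac12\,e^{-\lambda(1-p)t}$, which is the claimed value.

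As a self-contained cross-check, and to make the argument independent of the singular-part bookkeeping for $U$, I would also verify the result directly. The quantity $X(t)+Y(t)$ increases at rate $c$ while $D(t)\in\{d_0,d_1\}$ and decreases at rate $c$ while $D(t)\in\{d_2,d_3\}$; hence $X(t)+Y(t)=ct$ precisely when the particle never leaves the pair of directions $\{d_0,d_1\}$. The key observation is that each transition keeping the particle inside this pair, namely $d_0\to d_1$ (a counterclockwise turn while horizontal) and $d_1\to d_0$ (a clockwise turn while vertical), occurs with the same probability $p$. Conditioning on $N(t)=n$ and on starting in $\{d_0,d_1\}$, the probability of remaining there is therefore $p^n$, so
\begin{equation*}
\mathbb{P}\big(X(t)+Y(t)=ct\big)=\frac12\sum_{n=0}^{\infty}e^{-\lambda t}\frac{(\lambda t)^n}{n!}\,p^{\,n}=\frac12\,e^{-\lambda t(1-p)}.
\end{equation*}

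The routine parts are the Poisson summation and the telegraph atom formula; the one point deserving care is the uniformity of the transition probabilities in the direct argument. Since the turning rule is direction-dependent (counterclockwise with probability $p$ when horizontal, clockwise with probability $p$ when vertical), it is not a priori obvious that staying on the top-right boundary always costs the factor $p$ rather than a factor alternating between $p$ and $1-p$. Checking this, namely that the admissible move is the $p$-move both out of $d_0$ and out of $d_1$, is the crux that makes the geometric series collapse to the stated closed form and reconciles the direct computation with the telegraph-atom approach.
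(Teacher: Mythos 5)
Your proposal is correct and coincides with the paper's own treatment: your ``cross-check'' is exactly the paper's proof, which conditions on the initial direction lying in $\{d_0,d_1\}$ and sums $\sum_{k}\mathbb{P}(N(t)=k)\,p^{k}=e^{-\lambda t(1-p)}$, relying on precisely the point you flag as the crux, namely that the admissible turn out of $d_0$ (counterclockwise) and out of $d_1$ (clockwise) each carries probability $p$. Your primary route via Theorem~\ref{thm:decomp} and the telegraph atom $\mathbb{P}\big(U(t)=\tfrac{ct}{2}\big)=\tfrac{1}{2}e^{-\lambda(1-p)t}$ is also sound (and non-circular, since the decomposition is proved independently via characteristic functions) and appears in the paper as a remark immediately after the theorem.
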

\begin{proof}Without loss of generality, we assume that the initial direction of the particle $\big(X(t),\,Y(t)\big)$ is $d_0$. In order for the particle to lie on the top-right side of the square, the process must perform changes of direction by alternating counterclockwise and clockwise turns, each turn occurring with probability $p$. In formulas, we have that \begin{align}\mathbb{P}\Big(X(t)+Y(t)=ct\Big\lvert D(0)=d_0\Big)=&\sum_{k=0}^{\infty}\mathbb{P}\Big(N(t)=k\Big)\,p^k=e^{-\lambda t}\sum_{k=0}^{\infty}\frac{(\lambda t p)^k}{k!}=e^{-\lambda t (1-p)}.\nonumber\end{align}
Similarly, it can be shown that \begin{align}\mathbb{P}\Big(X(t)+Y(t)=ct\Big\lvert D(0)=d_1\Big)=e^{-\lambda t (1-p)}.\nonumber\end{align} We can therefore conclude that
\begin{align}\mathbb{P}\Big(X(t)+Y(t)=ct\Big)=\frac{1}{4}\Bigg[\mathbb{P}\Big(X(t)+Y(t)&=ct\Big\lvert D(0)=d_0\Big)\nonumber\\+&\mathbb{P}\Big(X(t)+Y(t)=ct\Big\lvert D(0)=d_1\Big)\Bigg]=\frac{1}{2}\,e^{-\lambda t (1-p)}.\nonumber\end{align}\end{proof}
Of course, similarly to what we have done in the proof of theorem \ref{thm:probboundary}, it can be shown that $$\mathbb{P}\Big(Y(t)-X(t)=ct\Big)=\frac{1}{2}\,e^{-\lambda t p}.$$ Thus, by observing that opposite sides of $S_{ct}$ are equiprobable, we can find the probability of $\big(X(t),\,Y(t)\big)$ lying on the whole  boundary $\partial S_{ct}$, which reads \begin{align}\mathbb{P}\Big(\big(X(t),\,Y(t)\big)\in\partial S_{ct}\Big)=&2\left[\mathbb{P}\left(X(t)+Y(t)=ct\right)+\mathbb{P}\left(Y(t)-X(t)=ct\right)\right]-\mathbb{P}\left(N(t)=0\right)\nonumber\\=&e^{-\lambda t (1-p)}+e^{-\lambda t p}-e^{-\lambda t}\end{align} where the probability of no changes of direction occurring $\mathbb{P}\left(N(t)=0\right)$ was subtracted in order to avoid double-counting of the vertices of the square. We emphasize that the probability in formula (\ref{boundaryprob}) includes the probability mass on the vertices of the square. By excluding such probability mass, it can be easily verified that \begin{equation}\label{boundaryprobsidenomass}\mathbb{P}\Big(X(t)+Y(t)=ct,\;\lvert X(t)-Y(t)\lvert<ct\Big)=\frac{1}{2}\,e^{-\lambda t (1-p)}-\frac{1}{2}e^{-\lambda t}.\end{equation} Moreover, observe that formula (\ref{boundaryprob}) can be regarded as a consequence of the decomposition (\ref{UVdecomposition}). Indeed, we have that $$\mathbb{P}\Big(X(t)+Y(t)=ct\Big)=\mathbb{P}\Big(U(t)=\frac{ct}{2}\Big)=\frac{1}{2}e^{-\lambda (1-p)}$$ where we have used standard results about the telegraph process in the last step.\\
Since we have obtained the probability of $\big(X(t),\,Y(t)\big)$ lying on the boundary of the support, we are now interested in obtaining the exact distribution of the particle on the boundary. By considering again the top-right side for simplicity, we study the density funcion $g(\eta,t)$ defined as
\begin{equation}\label{g}g(\eta,t)\,d\eta=\mathbb{P}\Big(X(t)+Y(t)=ct,\;X(t)-Y(t)\in d\eta\Big),\qquad |\eta|<ct.\end{equation}
We start by defining, for $j=0,1$, the auxiliary density functions \begin{equation*}\label{g_j}g_j(\eta,t)\,d\eta=\mathbb{P}\Big(X(t)+Y(t)=ct,\;X(t)-Y(t)\in d\eta,\; D(t)=d_j\Big),\qquad |\eta|<ct.\end{equation*} Clearly, it holds that \begin{equation}\label{sumg}g(\eta,t)=g_0(\eta,t)+g_1(\eta,t).\end{equation}
Moreover, it can be proved that
\begin{equation}\label{gsystem}\begin{dcases}
\frac{\partial g_0}{\partial t}=-c\frac{\partial g_0}{\partial \eta}+\lambda p\,g_1-\lambda g_0\\
\frac{\partial g_1}{\partial t}=c\frac{\partial g_1}{\partial \eta}+\lambda p\,g_0-\lambda g_1
\end{dcases}
\end{equation}
with initial conditions $g_j(\eta,0)=\frac{1}{4}\,\delta(\eta),\;j=0,1.$ Thus, in view of the relationship (\ref{sumg}), the probability density $g$ satisfies the following partial differential equation:
\begin{equation}\label{gpde}\left(\frac{\partial^2}{\partial t^2}+2\lambda \frac{\partial}{\partial t}-c^2\frac{\partial^2}{\partial \eta^2}+\lambda^2(1-p^2)\right)g=0.\end{equation} Equation (\ref{gpde}) permits us to obtain an explicit expression for $g$, which is given in the following theorem.
\begin{thm}The probability density function (\ref{g}) is given by
\begin{equation}\label{thm:gsol}g(\eta,t)=\frac{e^{-\lambda t}}{4c}\left[\lambda p\;I_0\left(\frac{\lambda p}{c}\sqrt{c^2t^2-\eta^2}\right)+\;\frac{\partial}{\partial t}I_0\left(\frac{\lambda p}{c}\sqrt{c^2t^2-\eta^2}\right)\right],\qquad\lvert\eta\lvert<ct.\end{equation}\end{thm}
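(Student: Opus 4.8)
The plan is to solve the second-order equation (\ref{gpde}) by reducing it to a classical telegraph equation through an exponential change of the dependent variable, and then to read off $g$ from the Bessel representation of the telegraph density already used above. Writing $g(\eta,t)=e^{-at}\,u(\eta,t)$ and substituting into (\ref{gpde}), the transformed equation for $u$ becomes
$$u_{tt}+2(\lambda-a)\,u_t-c^2u_{\eta\eta}+\left(a^2-2\lambda a+\lambda^2(1-p^2)\right)u=0.$$
The zeroth-order coefficient vanishes exactly when $a$ is a root of $a^2-2\lambda a+\lambda^2(1-p^2)=0$, i.e. $a=\lambda(1\pm p)$. Choosing $a=\lambda(1-p)$ turns the first-order coefficient into $2(\lambda-a)=2\lambda p$, so that $u$ satisfies the standard telegraph equation $u_{tt}+2\lambda p\,u_t-c^2u_{\eta\eta}=0$ with intensity $\lambda p$ and velocity $c$.

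Next I would fix the Cauchy data for $u$. Summing the two initial conditions $g_j(\eta,0)=\tfrac14\delta(\eta)$ gives $g(\eta,0)=\tfrac12\delta(\eta)$, hence $u(\eta,0)=\tfrac12\delta(\eta)$. For the time derivative I add the two equations of the system (\ref{gsystem}) to obtain
$$\frac{\partial g}{\partial t}=-c\,\frac{\partial}{\partial\eta}(g_0-g_1)-\lambda(1-p)(g_0+g_1),$$
and evaluating at $t=0$, where $g_0=g_1=\tfrac14\delta(\eta)$ makes the $\eta$-derivative term vanish, yields $g_t(\eta,0)=-\tfrac{\lambda(1-p)}{2}\delta(\eta)$. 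Since $g_t=e^{-at}(u_t-a u)$ with $a=\lambda(1-p)$, this is precisely compensated, so $u_t(\eta,0)=0$. Therefore $u$ is one half of the density of a telegraph process with parameters $(\lambda p,c)$ started at the origin with a uniformly chosen initial direction. Taking its absolutely continuous part, valid for $|\eta|<ct$, and inverting the substitution $g=e^{-\lambda(1-p)t}u$ recombines the exponentials as $e^{-\lambda(1-p)t}e^{-\lambda pt}=e^{-\lambda t}$ and reproduces exactly (\ref{thm:gsol}).

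The one genuinely delicate point is the computation of the first-order Cauchy datum $u_t(\eta,0)=0$: it is the exact cancellation between the drift term $-\lambda(1-p)(g_0+g_1)$ of the system and the term $-a\,u$ produced by the exponential substitution that forces $u_t(\eta,0)=0$, and hence selects precisely the symmetric Bessel combination appearing in (\ref{thm:gsol}). One must also keep track of the singular mass at $\eta=\pm ct$, which corresponds to the two vertices of $S_{ct}$ and is excluded from $g$ by the restriction $|\eta|<ct$, so that only the absolutely continuous part of the telegraph density enters. As an independent check, the same formula follows probabilistically from theorem \ref{thm:decomp}: the event $\{X(t)+Y(t)=ct\}$ coincides with $\{U(t)=ct/2\}$, which has probability $\tfrac12 e^{-\lambda(1-p)t}$, and conditionally on it $X(t)-Y(t)=2V(t)$ is independent of $U(t)$, so that $g(\eta,t)=\tfrac12 e^{-\lambda(1-p)t}\cdot\tfrac12 f_V(\eta/2,t)$ yields (\ref{thm:gsol}) after the rescaling of the telegraph density of $V$.
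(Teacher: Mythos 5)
Your proof is correct, but it takes a genuinely different route from the paper's. The paper tilts only by the full rate, setting $\widetilde{g}=e^{\lambda t}g$, which yields the Klein--Gordon-type equation $\widetilde{g}_{tt}-c^2\widetilde{g}_{\eta\eta}=\lambda^2p^2\,\widetilde{g}$; it then reduces this to a Bessel ODE via the change of variable $z=\sqrt{c^2t^2-\eta^2}$, discards the $K_0$ branch on integrability grounds, enlarges the solution family with the term $\partial_t I_0$, and fixes the two constants by a single global normalization: integrating the candidate solution over $(-ct,ct)$ with the identity (\ref{IntI}) and matching the known mass (\ref{boundaryprobsidenomass}). You instead choose the sharper tilt $a=\lambda(1-p)$ (a root of $a^2-2\lambda a+\lambda^2(1-p^2)=0$), which converts (\ref{gpde}) directly into the standard telegraph equation $u_{tt}+2\lambda p\,u_t=c^2u_{\eta\eta}$, and you pin down the solution through the full Cauchy data $u(\cdot,0)=\tfrac12\delta$, $u_t(\cdot,0)=0$, the latter obtained by the correct cancellation between the $-\lambda(1-p)g$ term from summing (\ref{gsystem}) and the $-au$ term from the substitution (your Fourier-space data agree with the initial conditions in (\ref{boundarychfode})). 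This identifies $u$ as one half of the Goldstein--Kac density with parameters $(\lambda p, c)$, and the exponentials recombine as $e^{-\lambda(1-p)t}e^{-\lambda p t}=e^{-\lambda t}$, giving (\ref{thm:gsol}). What each approach buys: the paper's normalization argument avoids invoking uniqueness for a distributional Cauchy problem, but at the cost of the somewhat ad hoc steps of rejecting $K_0$ and adding the $\partial_t I_0$ term with undetermined coefficients; your route requires uniqueness for the telegraph equation with delta data (standard for hyperbolic problems, and at the paper's level of rigor unobjectionable) but determines the answer with no free constants, and as a bonus the singular mass $\tfrac{1}{4}e^{-\lambda t}$ your identification places at $\eta=\pm ct$ matches exactly the atoms at the two vertices of the side. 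Your closing probabilistic check via theorem \ref{thm:decomp} --- writing $\{X(t)+Y(t)=ct\}=\{U(t)=ct/2\}$ and using independence of $U$ and $V$ so that $g(\eta,t)=\tfrac12 e^{-\lambda(1-p)t}\cdot\tfrac12 f_V(\eta/2,t)$ --- is in fact the cleanest derivation of all; the paper makes the analogous observation only for the total probability (\ref{boundaryprob}) and does not exploit it for the density.
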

\begin{proof}We define the auxiliary function \begin{equation}\widetilde{g}(\eta,t)=e^{\lambda t}g(\eta,t)\label{gcheck}.\end{equation}
Equation (\ref{gpde}) implies that $\widetilde{g}$ satisfies the partial differential equation
\begin{equation}\label{boundarypdecheck}\frac{\partial^2\widetilde{g}}{\partial t^2}-c^2\frac{\partial^2\widetilde{g}}{\partial \eta^2}=\lambda^2p^2\widetilde{g}.\end{equation}
Moreover, the change of variables \begin{equation}z=\sqrt{c^2t^2-\eta^2}\label{zvar}\end{equation} transforms equation (\ref{boundarypdecheck}) into the Bessel equation
\begin{equation*}\label{besself}\frac{d^2\widetilde{f}}{z^2}+\frac{1}{z}\frac{d\widetilde{f}}{dz}-\frac{\lambda^2p^2}{c^2}\widetilde{f}=0\end{equation*}
whose general solution reads \begin{equation}\label{besselgsol}\widetilde{f}(z)=A\;I_0\left(\frac{\lambda p}{c}\,z\right)+B\;K_0\left(\frac{\lambda p}{c}\,z\right).\end{equation}
We disregard the second term of formula (\ref{besselgsol}) because the modified Bessel function of the second kind $K_0(\cdot)$ would make the function $g(\eta,t)$ non-integrable in proximity of the endpoints $\eta=\pm ct$. Therefore, by inverting the transformations (\ref{gcheck}) and (\ref{zvar}), we express the solution to equation (\ref{gpde}) in the form 
\begin{equation}\label{boundarygensol}g(\eta,t)=e^{-\lambda t}\left[A\; I_0\left(\frac{\lambda p}{c}\sqrt{c^2t^2-\eta^2}\right)+B\;\frac{\partial}{\partial t}I_0\left(\frac{\lambda p}{c}\sqrt{c^2t^2-\eta^2}\right)\right]\end{equation}
where the additional term involving the time derivative of the Bessel function was introduced in order to add flexibility to the general solution to equation (\ref{gpde}). Observe that the introduction of the additional term can be performed since equation (\ref{gpde}) is homogeneous and its coefficient are constant with respect to time. In order to find the coefficients $A$ and $B$ of formula (\ref{boundarygensol}), we now use the well-known relationship
\begin{equation}\int_{-ct}^{ct}I_0\left(K\;\sqrt{c^2t^2-\eta^2}\right)\mathop{d\eta}=\frac{1}{K}\left(e^{Kct}-e^{-Kct}\right),\qquad c,t>0\label{IntI}\end{equation}
Formula (\ref{IntI}) implies that
\begin{equation}\label{boundarygensolint}\int_{-ct}^{ct}g(\eta,t)\mathop{d\eta}=e^{-\lambda t}\left\{\left(Bc+\frac{Ac}{\lambda p}\right)e^{\lambda p t}+\left(Bc-\frac{Ac}{\lambda p}\right)e^{-\lambda pt}-2Bc\right\}.\end{equation}
By comparing the expressions (\ref{boundaryprobsidenomass}) and (\ref{boundarygensolint}) we obtain
$$A=\frac{\lambda p}{4c},\qquad B=\frac{1}{4c}.$$
which completes the proof.
\end{proof}

We are now interested in obtaining an explicit expression for the characteristic function of $\big(X(t),Y(t)\big)$ on the boundary of the support.Without loss of generality, we consider the top-right side of the square and we give the following result.
\begin{thm}The characteristic function of $\big(X(t),Y(t)\big)$ on $\partial S_{ct}$ is
\begin{align}\label{boundarychf}\mathbb{E}\left[e^{i\alpha\left(X(t)-Y(t)\right)}\;\mathds{1}_{\{X(t)+Y(t)=ct\}}\right]=\frac{e^{-\lambda t}}{4}&\left[\left(1+\frac{\lambda p}{\sqrt{\lambda^2p^2-\alpha^2c^2}}\right)e^{t\sqrt{\lambda^2p^2-\alpha^2c^2}}\right.\nonumber\\+&\left.\left(1-\frac{\lambda p}{\sqrt{\lambda^2p^2-\alpha^2c^2}}\right)e^{-t\sqrt{\lambda^2p^2-\alpha^2c^2}}\right]\end{align}
\end{thm}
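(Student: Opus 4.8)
The plan is to bypass the density (\ref{thm:gsol}) altogether and read the answer off the decomposition established in Theorem \ref{thm:decomp}. Under the representation (\ref{UVdecomposition}) one has $X(t)+Y(t)=2U(t)$ and $X(t)-Y(t)=2V(t)$, so the event $\{X(t)+Y(t)=ct\}$ coincides with $\{U(t)=ct/2\}$, that is, the event that the telegraph process $U$, which moves with velocity $c/2$, sits on the upper endpoint of its support. Consequently the quantity to be computed rewrites as
\[
\mathbb{E}\left[e^{i\alpha(X(t)-Y(t))}\mathds{1}_{\{X(t)+Y(t)=ct\}}\right]=\mathbb{E}\left[e^{2i\alpha V(t)}\mathds{1}_{\{U(t)=ct/2\}}\right].
\]

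Next I would use the independence of $U$ and $V$ to factor this expectation as
\[
\mathbb{E}\left[e^{2i\alpha V(t)}\right]\cdot\mathbb{P}\big(U(t)=ct/2\big).
\]
The second factor is the mass that the telegraph process $U$, of intensity $\lambda(1-p)$, places on the positive endpoint of its support; as observed in the discussion following Theorem \ref{thm:probboundary}, this equals $\tfrac12 e^{-\lambda(1-p)t}$, being simply the probability of starting rightward and undergoing no reversal. The first factor is the characteristic function of the telegraph process $V$, of intensity $\lambda p$ and velocity $c/2$, already written down in the proof of Theorem \ref{thm:decomp}; evaluating that expression at the argument $2\alpha$ turns $A(\alpha,\beta)=\tfrac12\sqrt{4\lambda^2p^2-c^2(\alpha-\beta)^2}$ into $\sqrt{\lambda^2p^2-\alpha^2c^2}$.

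Finally I would multiply the two factors. Since $e^{-\lambda p t}\,e^{-\lambda(1-p)t}=e^{-\lambda t}$ and the two prefactors combine as $\tfrac12\cdot\tfrac12=\tfrac14$, the product collapses exactly to the claimed expression (\ref{boundarychf}), with the bracketed sum inherited verbatim from the characteristic function of $V$.

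The one point requiring care is conceptual rather than computational: the functional $e^{i\alpha(x-y)}\mathds{1}_{\{x+y=ct\}}$ charges the measure-zero line $\{x+y=ct\}$, so the passage to $U$ and $V$ must be justified as an identity between the joint law of $(X(t),Y(t))$ and that of $(U(t)+V(t),U(t)-V(t))$ applied to this bounded measurable functional. This is legitimate precisely because Theorem \ref{thm:decomp} is an equality of joint distributions, and the merit of the approach is that the atomic contributions from the two vertices of the side are then incorporated automatically, whereas a direct Fourier inversion of the density (\ref{thm:gsol}) would have to add them by hand. I expect this bookkeeping of the boundary atoms to be the only genuine subtlety; once the event and the exponent are correctly expressed in terms of $U$ and $V$, the remainder is a one-line substitution.
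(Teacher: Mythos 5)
Your proposal is correct, but it takes a genuinely different route from the paper's own proof. The paper stays inside its PDE framework: it Fourier-transforms the telegraph-type equation (\ref{gpde}) for the boundary density in the variable $\eta$, obtaining the second-order ODE (\ref{boundarychfode}) with initial data $\widehat{f}(\alpha,0)=\frac{1}{2}$ and $\frac{d}{dt}\widehat{f}(\alpha,t)\big\rvert_{t=0}=-\frac{\lambda(1-p)}{2}$, and then fixes the two coefficients of the general solution. You instead read the result off Theorem \ref{thm:decomp}: writing $X(t)+Y(t)=2U(t)$, $X(t)-Y(t)=2V(t)$ and using the independence of $U(t)$ and $V(t)$, the expectation factors as $\mathbb{E}\big[e^{2i\alpha V(t)}\big]\cdot\mathbb{P}\big(U(t)=\tfrac{ct}{2}\big)$, where the second factor is $\tfrac{1}{2}e^{-\lambda(1-p)t}$ and the first is the full telegraph characteristic function of $V$ (intensity $\lambda p$, velocity $\tfrac{c}{2}$) at argument $2\alpha$, for which indeed $A(\alpha,-\alpha)=\sqrt{\lambda^2p^2-\alpha^2c^2}$; the product gives (\ref{boundarychf}) exactly. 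Your handling of the one subtle point is sound: the functional $e^{i\alpha(x-y)}\mathds{1}_{\{x+y=ct\}}$ is bounded and measurable, and only the fixed-$t$ joint law enters, which is precisely what Theorem \ref{thm:decomp} supplies; moreover, since the telegraph characteristic function of $V$ carries the endpoint atoms at $\pm\tfrac{ct}{2}$, the vertex masses of the side are included automatically — consistently, at $\alpha=0$ your expression collapses to $\tfrac{1}{2}e^{-\lambda(1-p)t}$, matching Theorem \ref{thm:probboundary}. What each approach buys: yours is shorter and conceptually transparent, and it sidesteps the need to justify the initial conditions of (\ref{boundarychfode}), which the paper simply states; the paper's derivation, on the other hand, is self-contained within the boundary PDE machinery already set up for the density (\ref{thm:gsol}) and does not presuppose the decomposition theorem, so it could even serve as an independent cross-check of (\ref{UVdecomposition}) on the boundary of the support.
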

\begin{proof}
By using the notation $$\widehat{g}(\alpha,t)=\mathbb{E}\left[e^{i\alpha\left(X(t)-Y(t)\right)}\;\mathds{1}_{\{X(t)+Y(t)=ct\}}\right]$$
equation (\ref{gpde}) implies that the ordinary differential equation
\begin{equation}\label{boundarychfode}
\begin{dcases}
\frac{d^2\widehat{f}}{dt^2}+2\lambda\frac{d\widehat{f}}{dt}+\left[c^2\alpha^2+\lambda^2(1-p^2)\right]\widehat{f}=0\\
\widehat{f}(\alpha,0)=\frac{1}{2}\\
\frac{d}{d t}\left.\widehat{f}(\alpha,t)\right\lvert_{t=0}=-\frac{\lambda(1-p)}{2}
\end{dcases}\end{equation}
is satisfied. The general solution to equation (\ref{boundarychfode}) is
$$\widehat{g}(\alpha,t)=k_0(\alpha)\;e^{-\lambda t+t\sqrt{\lambda^2p^2-\alpha^2c^2}}+k_1(\alpha)\;e^{-\lambda t-t\sqrt{\lambda^2p^2-\alpha^2c^2}}.$$
\noindent The coefficients $k_0$ and $k_1$ can be easily determined by using the initial conditions.
\end{proof}

\section{Time spent moving in vertical direction}
\noindent We conclude our paper by studying the distribution of the time spent by $\big(X(t),Y(t)\big)$ moving vertically. Specifically, we consider the stochastic process \begin{equation*}\label{Tdef}T(t)=\int_0^t\mathds{1}_{\{D(\tau)\in\{d_1,d_3\}\}}\mathop{d\tau},\qquad t>0.\end{equation*}
As observed by Orsingher and Marchione \cite{orsinghermarchione2024}, for any fixed $t>0$, the support of the random variable $T(t)$ is given by the interval $[0,t]$. Furthermore, the distribution of $T(t)$ has a degenerate component on the extrema of this interval and it can be easily shown that
\begin{equation}\label{verticalboundary}\mathbb{P}\left(T(t)=0\right)=\mathbb{P}\left(T(t)=t\right)=\frac{1}{2}\;e^{-\lambda t}.\end{equation}
\noindent We are interested in obtaining the continuous component of the distribution of $T(t)$,  which is described by the probability density function
\begin{equation}\label{hdef}h(s,t)=\mathbb{P}\Big(T(t)\in\mathop{ds}\Big)/ds,\qquad s\in(0,t).\end{equation}
\noindent By defining the density functions $h_j(s,t),\;j=0,1,$
\begin{equation*}h_j(s,t)=\mathbb{P}\Big(T(t)\in\mathop{ds},\;D(t)\in\{d_j,\;d_{j+2}\}\Big)/ds,\qquad s\in(0,t).\end{equation*} it can be verified that the system of differential equations
\begin{equation}\label{verticaldiffeqs}
\begin{dcases}
\dfrac{\partial h_0}{\partial t}=\lambda\left(h_1-h_0\right)\\
\dfrac{\partial h_1}{\partial t}=-\dfrac{\partial h_1}{\partial s}+\lambda\left(h_0-h_1\right)\\
\end{dcases}
\end{equation}
\noindent is satisfied with initial conditions $h_0(s,0)=h_1(s,0)=\frac{1}{2}\;\delta(s)$. Therefore, since $h(s,t)=h_0(s,t)+h_1(s,t)$, the following partial differential equation is satisfied:
\begin{equation}\label{hpde}\left(\frac{\partial^2}{\partial t^2}+\frac{\partial^2}{\partial s\;\partial t}+2\lambda \frac{\partial}{\partial t}+\lambda\frac{\partial}{\partial s}\right)h=0.\end{equation} It is interesting to note that both the system (\ref{verticaldiffeqs}) and equation (\ref{hpde}) do not depend on $p$. This holds because each time the particle changes direction, it switches from the vertical to the horizontal direction, or vice versa, with probability 1. The exact distribution of $T(t)$ is given in the following theorem.
\begin{thm}The probability density function (\ref{hdef}) reads
\begin{equation}\label{verticaldistrthm}h(s,t)=e^{-\lambda t}\left[\lambda I_0\left(2\lambda\sqrt{s(t-s)}\right)+\frac{\partial}{\partial t}I_0\left(2\lambda\sqrt{s(t-s)}\right)\right],\qquad s\in(0,t).\end{equation}\end{thm}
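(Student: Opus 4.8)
The plan is to mirror the proof of the boundary density, adapting the argument to the mixed second-order operator appearing in (\ref{hpde}). First I would remove the drift by setting $\widetilde{h}(s,t)=e^{\lambda t}h(s,t)$, in analogy with (\ref{gcheck}). Substituting $h=e^{-\lambda t}\widetilde{h}$ into (\ref{hpde}), the first-order terms $2\lambda\,\partial_t h$ and $\lambda\,\partial_s h$ are precisely what is needed to cancel the contributions produced by $\partial_{tt}$ and $\partial_{st}$ acting on the exponential factor, so that a short computation leaves the reduced equation $\frac{\partial^2\widetilde{h}}{\partial t^2}+\frac{\partial^2\widetilde{h}}{\partial s\,\partial t}=\lambda^2\widetilde{h}$.

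Next I would pass to the characteristic coordinates $u=s$, $v=t-s$, under which $\partial_t=\partial_v$ and $\partial_t+\partial_s=\partial_u$, so the principal part factorizes and the equation takes the canonical form $\frac{\partial^2\widetilde{h}}{\partial u\,\partial v}=\lambda^2\widetilde{h}$, with $uv=s(t-s)$. Seeking a solution that depends only on the single variable $z=\sqrt{uv}=\sqrt{s(t-s)}$ reduces this to the modified Bessel equation $\phi''(z)+\frac{1}{z}\phi'(z)-4\lambda^2\phi(z)=0$, whose integrable solution is $\phi(z)=I_0(2\lambda z)$. As in the boundary proof I would discard the second independent solution $K_0(2\lambda z)$: near the endpoints $s=0$ and $s=t$, where $z\to0$, its time derivative behaves like $(t-s)^{-1}$ and is not integrable.

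To build the general candidate I would exploit that (\ref{hpde}) has constant coefficients, hence time translation sends solutions to solutions and $\partial_t\big(e^{-\lambda t}I_0(2\lambda\sqrt{s(t-s)})\big)$ is again a solution; this yields the two-parameter family $h(s,t)=e^{-\lambda t}\big[A\,I_0(2\lambda\sqrt{s(t-s)})+B\,\partial_t I_0(2\lambda\sqrt{s(t-s)})\big]$. The constants are then fixed by the normalization $\int_0^t h(s,t)\,ds=1-\mathbb{P}(T(t)=0)-\mathbb{P}(T(t)=t)=1-e^{-\lambda t}$, using (\ref{verticalboundary}). The integral of the $I_0$ term follows from (\ref{IntI}) after the substitution $\eta=s-\tfrac{t}{2}$, which turns $s(t-s)$ into $(\tfrac{t}{2})^2-\eta^2$ and gives $\frac{1}{2\lambda}(e^{\lambda t}-e^{-\lambda t})$. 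Matching the coefficients of $e^{-\lambda t}$, $e^{-2\lambda t}$ and the constant in the resulting identity forces $A=\lambda$ and $B=1$, which is exactly (\ref{verticaldistrthm}).

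I expect the main technical obstacle to be the $s$-integral of the time-derivative term, because the upper limit of integration itself depends on $t$: one cannot interchange $\partial_t$ and $\int_0^t$ freely. Instead I would apply Leibniz's rule to $\frac{d}{dt}\int_0^t I_0(2\lambda\sqrt{s(t-s)})\,ds$, whose boundary contribution at $s=t$ equals $I_0(0)=1$, to obtain $\int_0^t \partial_t I_0(2\lambda\sqrt{s(t-s)})\,ds=\frac{1}{2}(e^{\lambda t}+e^{-\lambda t})-1$. A secondary point worth verifying is that the normalization, being a single identity in $t$, yields an overdetermined system for $A$ and $B$; the fact that the constant-term equation is automatically satisfied once the two exponential coefficients are matched provides a convenient internal check on the computation.
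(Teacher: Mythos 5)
Your proposal is correct and follows essentially the same route as the paper's proof: the exponential substitution $\widetilde{h}=e^{\lambda t}h$, reduction of (\ref{hpde}) to the Bessel equation in $z=\sqrt{s(t-s)}$, the two-parameter candidate $e^{-\lambda t}\bigl[A\,I_0+B\,\partial_t I_0\bigr]$, and determination of $A=\lambda$, $B=1$ from the normalization $\int_0^t h(s,t)\,ds=1-e^{-\lambda t}$ via the integral identity (\ref{s(t-s)besselintegral}). Your added details --- the characteristic coordinates $u=s$, $v=t-s$ factorizing the principal part, the Leibniz-rule treatment of $\int_0^t\partial_t I_0\,ds$ with the boundary contribution $I_0(0)=1$, and the observation that the overdetermined matching of exponential coefficients is internally consistent --- are all accurate and merely make explicit steps the paper leaves implicit.
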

\begin{proof}By defining the auxiliary function $$\widetilde{h}(s,t)=e^{\lambda t}\;h(s,t),$$ equation (\ref{hpde}) implies that
\begin{equation}\label{hcheckpde}\left(\frac{\partial^2}{\partial t^2}+\frac{\partial^2}{\partial s\partial t}-\lambda^2\right)\widetilde{h}=0.\end{equation} By means of the change of variables $$z=\sqrt{s(t-s)}$$ equation (\ref{hcheckpde}) is transformed into the Bessel equation
\begin{equation*}\label{besselh}\frac{d^2\widetilde{h}}{z^2}+\frac{1}{z}\frac{d\widetilde{h}}{dz}-4\lambda^2\widetilde{h}=0.\end{equation*}
\noindent Thus, the solution to equation (\ref{hpde}) can be expressed as
\begin{equation}\label{verticaldistrthmgeneral}h(s,t)=e^{-\lambda t}\left[A\; I_0\left(2\lambda\sqrt{s(t-s)}\right)+B\;\frac{\partial}{\partial t}I_0\left(2\lambda\sqrt{s(t-s)}\right)\right].\end{equation}
\noindent In view of formula (\ref{verticalboundary}), it must hold that $$\int_0^th(s,t)\mathop{ds}=1-e^{-\lambda t}.$$ Thus, by using the integral formula \begin{equation}\label{s(t-s)besselintegral}\int_0^tI_0\Big(K\;\sqrt{s(t-s)}\Big)\mathop{ds}=\frac{1}{K}\left(e^{\frac{Kt}{2}}-e^{-\frac{Kt}{2}}\right),\qquad t>0\end{equation} we finally obtain $$A=\lambda,\qquad B=1$$ which completes the proof.
\end{proof}

\noindent In the following theorem, we give an explicit expression for the characteristic function of $T(t)$.

\begin{thm}\label{italo70}The characteristic function of $T(t)$ reads \begin{align}\mathbb{E}\left[e^{i\alpha T(t)}\right]=&\frac{1}{2}\left(1+\frac{2\lambda}{\sqrt{4\lambda^2-\alpha^2}}\right)e^{i \frac{\alpha}{2}t-\lambda t+\frac{1}{2}\sqrt{4\lambda^2-\alpha^2}\;t}\nonumber\\
\;&+\frac{1}{2}\left(1-\frac{2\lambda}{\sqrt{4\lambda^2-\alpha^2}}\right)e^{i \frac{\alpha}{2}t-\lambda t-\frac{1}{2}\sqrt{4\lambda^2-\alpha^2}\;t}.\label{verticalchf}\end{align}\end{thm}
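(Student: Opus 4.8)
The plan is to obtain the characteristic function $\widehat h(\alpha,t)=\mathbb E\big[e^{i\alpha T(t)}\big]$ by reducing the governing equation (\ref{hpde}) to a linear ordinary differential equation in $t$, exactly in the spirit of the proofs of Theorem \ref{thm:interiorchf} and the boundary characteristic function. Treating $\alpha$ as a parameter and Fourier-transforming (\ref{hpde}) in the $s$-variable, the multiplier $\partial/\partial s\mapsto -i\alpha$ turns the mixed fourth operator into the second-order equation
$$\frac{d^2\widehat h}{dt^2}+(2\lambda-i\alpha)\,\frac{d\widehat h}{dt}-i\lambda\alpha\,\widehat h=0.$$
This is the analogue of equations (\ref{eulerode}) and (\ref{boundarychfode}), and it is where the bulk of the structure of (\ref{verticalchf}) already appears.

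Next I would supply the two initial conditions. Since $T(0)=0$ almost surely, $\widehat h(\alpha,0)=1$. For the first derivative I would use that $\tfrac{d}{dt}T(t)=\mathds 1_{\{D(t)\in\{d_1,d_3\}\}}$, so that differentiating under the expectation gives $\tfrac{d}{dt}\widehat h(\alpha,t)\big|_{t=0}=i\alpha\,\mathbb P\big(D(0)\in\{d_1,d_3\}\big)=\tfrac{i\alpha}{2}$, the last equality because the initial direction is chosen uniformly among $d_0,d_1,d_2,d_3$. With these in hand the ODE is solved by elementary means: its characteristic equation $\theta^2+(2\lambda-i\alpha)\theta-i\lambda\alpha=0$ has discriminant $4\lambda^2-\alpha^2$, hence roots $\theta_\pm=-\lambda+\tfrac{i\alpha}{2}\pm\tfrac12\sqrt{4\lambda^2-\alpha^2}$, and the general solution is $\widehat h=c_+e^{\theta_+t}+c_-e^{\theta_-t}$. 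Imposing the two initial conditions yields $c_++c_-=1$ and $\tfrac12\sqrt{4\lambda^2-\alpha^2}\,(c_+-c_-)=\lambda$, whence $c_\pm=\tfrac12\big(1\pm\tfrac{2\lambda}{\sqrt{4\lambda^2-\alpha^2}}\big)$; substituting produces (\ref{verticalchf}) after factoring out $e^{i\alpha t/2-\lambda t}$.

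The step I expect to be the main obstacle is the passage from the density equation (\ref{hpde}) to the clean ODE above, because the law of $T(t)$ carries the two atoms of mass $\tfrac12e^{-\lambda t}$ at $s=0$ and $s=t$ recorded in (\ref{verticalboundary}). These atoms mean that the transport term $\partial_s$ generates genuine boundary contributions when integrated against $e^{i\alpha s}$, so a naive transform of (\ref{hpde}) is not automatically rigorous and, in particular, cannot simply be read off by integrating the interior density (\ref{verticaldistrthm}) against $e^{i\alpha s}$.

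To get around this I would not transform (\ref{hpde}) directly but instead work with the sub-characteristic functions $\widehat h_j(\alpha,t)=\mathbb E\big[e^{i\alpha T(t)}\mathds 1_{\{D(t)\in\{d_j,d_{j+2}\}\}}\big]$ for $j=0,1$, for which the system (\ref{verticaldiffeqs}) transforms with no boundary issues into the first-order linear system $\tfrac{d}{dt}\widehat h_0=\lambda(\widehat h_1-\widehat h_0)$ and $\tfrac{d}{dt}\widehat h_1=\lambda\widehat h_0+(i\alpha-\lambda)\widehat h_1$, with $\widehat h_0(\alpha,0)=\widehat h_1(\alpha,0)=\tfrac12$. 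Here the atoms are encoded automatically, both in the symmetric initial data and in the coefficient $i\alpha$ attached to the vertical component. Since $\widehat h=\widehat h_0+\widehat h_1$ gives $\widehat h{}'=i\alpha\,\widehat h_1$, eliminating one unknown reproduces precisely the second-order ODE and the initial data $\widehat h(\alpha,0)=1$, $\widehat h{}'(\alpha,0)=\tfrac{i\alpha}{2}$ stated above, after which the computation of $c_\pm$ and the final substitution go through as before. This is the route I would actually write out, using the direct transform of (\ref{hpde}) only as motivation for the target ODE.
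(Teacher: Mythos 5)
Your proposal is correct and follows essentially the same route as the paper: the paper likewise reduces equation (\ref{hpde}) to the ODE (\ref{verticalchfode}) with initial data $\widehat h(\alpha,0)=1$ and $\frac{d}{dt}\widehat h(\alpha,t)\big\rvert_{t=0}=\frac{i\alpha}{2}$, and determines the two constants from these conditions exactly as you do. Your extra step of deriving the ODE from the sub-characteristic functions $\widehat h_0,\widehat h_1$ (transforming the system (\ref{verticaldiffeqs}) rather than (\ref{hpde}) directly, so that the atoms at $s=0$ and $s=t$ are handled cleanly) is a more careful justification of a step the paper simply asserts, but it is the same argument in substance.
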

\begin{proof}By setting $$\widehat{h}(\alpha,t)=\mathbb{E}\left[e^{i\alpha T(t)}\right]$$ equation (\ref{hpde}) implies that the ordinary differential equation
\begin{equation}\label{verticalchfode}
\begin{dcases}
\frac{d^2\widehat{h}}{dt^2}+\left[2\lambda-i\alpha\right] \frac{d\widehat{h}}{dt}-i\lambda\alpha\widehat{h}=0\\
\widehat{h}(\alpha,0)=1\\
\frac{d}{d t}\left.\widehat{h}(\alpha,t)\right\lvert_{t=0}=\frac{i\alpha}{2}
\end{dcases}\end{equation} is satisfied. The general solution to equation (\ref{verticalchfode}) reads $$\widehat{h}(\alpha,t)=k_0(\alpha)\;e^{i \frac{\alpha}{2}t-\lambda t+\frac{1}{2}\sqrt{4\lambda^2-\alpha^2}\;t}+k_1(\alpha)\;e^{i \frac{\alpha}{2}t-\lambda t-\frac{1}{2}\sqrt{4\lambda^2-\alpha^2}\;t}$$ and the coefficients $k_0$ and $k_1$ follow by using the initial conditions.
\end{proof}
Finally, it is interesting to observe that, in the hydrodynamic limit, the process $T(t)$ becomes deterministic and its distribution collapses to the point $\frac{t}{2}$. This implies that the process  $\big(X(t),Y(t)\big)$ spends half of the time moving vertically. Indeed, dividing equation (\ref{gpde}) by $\lambda$ and taking the limit for $\lambda\to+\infty$ yields the equation \begin{equation}\frac{\partial g}{\partial t}=-\frac{1}{2}\frac{\partial g}{\partial s}.\label{gpdelimit}\end{equation} The Fourier transform of the solution to equation (\ref{gpdelimit}) is $$\lim_{\lambda\to+\infty}\mathbb{E}\left[e^{i\alpha T(t)}\right]=e^{i\alpha\frac{t}{2}}.$$ which implies that $$\lim_{\lambda+\infty}\mathbb{P}\Big(T(t)\in\mathop{ds}\Big)/ds=\delta\left(s-\frac{t}{2}\right).$$

\bibliographystyle{plain}
\nocite{*}
\bibliography{bibliography}
\end{document}